\definecolor{verylight}{gray}{0.97}
\definecolor{light}{gray}{0.9}
\definecolor{medium}{gray}{0.85}
\definecolor{dark}{gray}{0.6}
\def\NZQ{\Bbb}               
\def\FF{{\NZQ F}}
\def\GG{{\NZQ G}}
\def\KK{{\NZQ K}}
\def\frk{\frak}               
\def\Phi{{\frk n}}
\def\Phi{{\frk N}}
\def\MF{{\mathcal F}}
\def\MS{{\mathcal S}}
\def\MA{{\mathcal A}}
\def\MN{{\mathcal N}}
\def\opn#1#2{\def#1{\operatorname{#2}}} 
\opn\chara{char} \opn\length{\ell} \opn\pd{pd} \opn\rk{rk}
\opn\projdim{proj\,dim} \opn\injdim{inj\,dim} \opn\rank{rank}
\opn\depth{depth} \opn\grade{grade} \opn\height{height}
\opn\embdim{emb\,dim} \opn\codim{codim}
\opn\Tr{Tr} \opn\bigrank{big\,rank}
\opn\superheight{superheight}\opn\lcm{lcm}
\opn\trdeg{tr\,deg}
\opn\reg{reg} \opn\lreg{lreg} \opn\ini{in} \opn\lpd{lpd}
\opn\size{size}\opn\bigsize{bigsize}
\opn\cosize{cosize}\opn\bigcosize{bigcosize}
\opn\sdepth{sdepth}\opn\sreg{sreg}
\opn\link{link}\opn\fdepth{fdepth}
\opn\div{div} \opn\Div{Div} \opn\cl{cl} \opn\Cl{Cl}
\opn\Spec{Spec} \opn\Supp{Supp} \opn\supp{supp} \opn\Sing{Sing}
\opn\Ass{Ass} \opn\Min{Min}\opn\Mon{Mon} \opn\dstab{dstab} \opn\astab{astab}
\opn\Syz{Syz}
\opn\Ann{Ann} \opn\Rad{Rad} \opn\Soc{Soc}
\opn\Im{Im} \opn\Ker{Ker} \opn\Coker{Coker} \opn\Am{Am}
\opn\Hom{Hom} \opn\Tor{Tor} \opn\Ext{Ext} \opn\End{End}
\opn\Aut{Aut} \opn\id{id}
\opn\nat{nat}
\opn\pff{pf}
\opn\Pf{Pf} \opn\GL{GL} \opn\SL{SL} \opn\mod{mod} \opn\ord{ord}
\opn\Gin{Gin} \opn\Hilb{Hilb}\opn\sort{sort}
\opn\initial{init}
\opn\ende{end}
\opn\height{height}
\opn\type{type}
\opn\aff{aff} \opn\con{conv} \opn\relint{relint} \opn\st{st}
\opn\lk{lk} \opn\cn{cn} \opn\core{core} \opn\vol{vol}
\opn\link{link} \opn\star{star}\opn\lex{lex}
\opn\gr{gr}
\def\pot#1#2{#1[\kern-0.28ex[#2]\kern-0.28ex]}
\opn\dirlim{\underrightarrow{\lim}}
\opn\inivlim{\underleftarrow{\lim}}
\let\union=\cup
\let\sect=\cap
\let\tensor=\otimes
\let\iso=\cong
\let\Sect=\bigcap
\let\to=\rightarrow
\let\To=\longrightarrow
\def\Implies{\ifmmode\Longrightarrow \else
        \unskip${}\Longrightarrow{}$\ignorespaces\fi}
\def\implies{\ifmmode\Rightarrow \else
        \unskip${}\Rightarrow{}$\ignorespaces\fi}
\def\iff{\ifmmode\Longleftrightarrow \else
        \unskip${}\Longleftrightarrow{}$\ignorespaces\fi}
 \theoremstyle{plain}
\newtheorem{Theorem}{Theorem}[section]
 \newtheorem{Corollary}[Theorem]{Corollary}
 \theoremstyle{definition}
\let\epsilon\varepsilon
\let\kappa=\varkappa
\def\qed{\ifhmode\textqed\fi
      \ifmmode\ifinner\quad\qedsymbol\else\dispqed\fi\fi}
\def\textqed{\unskip\nobreak\penalty50
       \hskip2em\hbox{}\nobreak\hfil\qedsymbol
       \parfillskip=0pt \finalhyphendemerits=0}
\def\dispqed{\rlap{\qquad\qedsymbol}}
\opn\dis{dis}
\def\pnt{{\raise0.5mm\hbox{\large\bf.}}}
\opn\Lex{Lex}
\begin{document}
\title{The face ideal of a simplicial complex}
\author {J\"urgen Herzog and  Takayuki Hibi}

\address{J\"urgen Herzog, Fachbereich Mathematik, Universit\"at Duisburg-Essen, Campus Essen, 45117
Essen, Germany} \email{juergen.herzog@uni-essen.de}

\address{Takayuki Hibi, Department of Pure and Applied Mathematics, Graduate School of Information Science and Technology,
Osaka University, Toyonaka, Osaka 560-0043, Japan}
\email{hibi@math.sci.osaka-u.ac.jp}

\begin{abstract}
Given a simplicial complex we associate to it a squarefree monomial ideal which we call the face ideal of the simplicial complex, and show that it has linear quotients. It turns out that its Alexander dual is a whisker complex. We apply this construction in particular to chain and antichain ideals of a finite partially ordered set. We also introduce so-called higher dimensional whisker complexes and show that their independence  complexes are shellable.
\end{abstract}
\thanks{}
\subjclass[2010]{}
\keywords{}

\maketitle
\section*{Introduction}
Let $S=K[x_1,\ldots,x_n,y_1,\ldots,y_n]$ denote the polynomial ring
in $2n$ variables over the field $K$.  In general, given any subset $F$ of
$\{ x_{1}, \ldots, x_{n} \}$,
we define the squarefree monomial $u_{F}$ of $S$ of degree $n$ by setting
\[
u_F = \prod_{x_{i} \in F} x_{i}
\prod_{x_{j} \in \{ x_{1}, \ldots, x_{n} \} \setminus F} y_{j}.
\]
Given a collection $\MS$ of subsets of $\{ x_{1}, \ldots, x_{n} \}$ one defines the ideal $I_\MS$ generated by the monomials $u_F$ with $F\in \MS$. Of particular interest are collections of sets which naturally arise in combinatorics.

The first example of this kind which appeared in the literature is the following: let $P = \{ x_{1}, \ldots, x_{n} \}$ be a finite partially ordered set.
A {\em poset ideal} of $P$ is a subset $\alpha$  of $P$ with the property that   if
$x_{i} \in \alpha$ and $x_{j} \leq x_{i}$, then $x_{j} \in \alpha$.
In particular the empty set as well as $P$ itself is a poset ideal of $P$.
The squarefree monomial ideal $I(P) \subset S$ which is generated by those monomials
$u_{\alpha}$ for which $\alpha$ is a poset ideal of $P$ has played an
important role in combinatorial and computational commutative algebra
(\cite{HH1}, \cite{EHM}).  The ideal  $I(P)$ has the remarkable property that it has a linear reolution \cite[Theorem 9.1.8]{HH}.

Now let   $I_{A}(P) \subset S$ be the squarefree monomial ideal
which is generated by those monomials $u_{\beta}$ for which $\beta$ is an antichain
of $P$.  (Recall that an {\em antichain} of $P$ is a subset $\beta \subset P$
for which any two elements $x_{i}$ and $x_{j}$ with $i \neq j$
belonging to $\beta$ are incomparable in $P$.) The toric ring generated by $u_\alpha$ with $u_\alpha\in I(P)$ and the  toric ring generated by $u_\beta$ with $u_\beta\in I_A(P)$ have similar properties. In particular, both are algebras with straighening laws on suitable distributive lattices, see \cite{H} and \cite{HL}. Thus one may  expect that  $I(P)$ and $I_A(P)$ have similar properties as well.

Similarly we may also consider
the squarefree monomial ideal $I_{C}(P) \subset S$
which is generated by those monomials  $u_{\gamma}$ for which $\gamma$ is a chain of $P$.
(Recall that a {\em chain} of $P$ is a totally ordered subset of $P$.)
As expected, each of the antichain ideal $I_{A}(P)$ and
the chain ideal $I_{C}(P)$ has linear quotients will be shown in
Theorem \ref{chain}.

However, far beyond the study of
antichain ideals and chain ideals of partially ordered sets,
the study of the present paper will be done in a much more general situation.
Since the set of antichains of $P$ as well as the set of chains of $P$
is clearly a simplicial complex on the vertex set $\{ x_{1}, \ldots, x_{n} \}$,
it is natural to study more generally ideals $I_\MS$ where $\MS$ is the set of faces of any simplicial complex. Thus we introduce the face ideal $J_{\Delta}$
of a simplicial $\Delta$ on $\{ x_{1}, \ldots, x_{n} \}$.
In other words, the face ideal $J_{\Delta}$ is the ideal
of $S$ which is generated by the monomials $u_F$ with $F\in \Delta$.  Theorem \ref{faceideal} gives
the structure of the Alexander dual $(J_{\Delta})^{\vee}$ of $J_{\Delta}$.
Somewhat surprisingly, it turns out that  $(J_{\Delta})^{\vee}$ is
a whisker complex, which is a generalization of whisker graphs,  first introduced by Villarreal \cite{V} and further  studied and generalized in \cite{CN}, \cite{FH}, \cite{HHKO} and \cite{VTV}.
A simple polarization argument shows that  $(J_{\Delta})^{\vee}$ is Cohen--Macaulay.  Hence, again.
$J_{\Delta}$ has a linear resolution.
We also describe the explicit minimal free resolution of $J_{\Delta}$
(Theorem \ref{resolution}) and compute the Betti numbers of $J_{\Delta}$
(Corollary \ref{betti}). We would like to mention that whisker complexes are special classes of grafted complexes as introduced by  Faridi \cite{F}.

In Section $2$, we show that the face ideal $J_{\Delta}$ of a simplicial
complex $\Delta$ has linear quotients.  This fact implies  that  the independence complex  of the whisker complex of an arbitrary simplicial complex
is shellable (Corollary \ref{shellable}). Recall that if $\Gamma$ is a simplicial complex and $I(\Gamma)$ its facet ideal, then the simplicial complex $\Delta$ with $I_\Delta=I(\Gamma)$ is called the {\em independence complex} of $\Gamma$. Here $I_\Delta$ denotes the Stanley--Reisner ideal of $\Delta$.  The  faces of $\Delta$ are those subsets of the vertex set of $\Gamma$ which do not contain any facet of $\Gamma$.

Dochtermann and Engstr\"om \cite{DE} even showed that the independence complex of  whisker graphs are pure and vertex decomposable, which in particular implies that the independence complex of any whisker graph is shellable, see also \cite{HHKO}.

Finally, in Section $4$, we introduce the concept of
higher dimensional whisker complexes, which is a generalization of
whisker complexes introduced in Section $1$.
Theorem~\ref{generalized} guarantees that hte independence  complex
of the higher dimensional whisker complex of
an arbitrary simplicial complex
is shellable. Thus our Theorem~\ref{generalized} generalizes the result of Dochtermann and Engstr\"om regarding shellability.

\medskip
A.~Engstr\"om  kindly informed us that his student Lauri Loiskekoski in his Master Thesis \cite{LL} has also introduced what we call the face ideal of a simplicial complex, and, among other results, has  shown that face ideals have linear resolutions, cf.\ our Corollary~\ref{always}.

\section{Face ideals and whisker complexes}
Let $S=K[x_1,\ldots,x_n,y_1,\ldots,y_n]$ denote the polynomial ring in $2n$ variables over the field $K$ and $\Delta$ a simplicial complex on the vertex set $\{x_1,\ldots,x_n\}$.

The \emph{whisker complex}  of $\Delta$
is the simplicial complex
$W(\Delta)$ on the vertex set $\{x_1,\ldots,x_n,y_1,\ldots,y_n\}$
which is obtained from $\Delta$ by adding the facets $\{x_i,y_i\}$, called whiskers,
for $i=1,\ldots,n$.

For each face $F\in \Delta$ we associate the monomial $u_F\in S$ defined by
\[
u_F=x_Fy_{F^c},
\]
where
\[
x_F=\prod_{x_i\in F}x_i\quad \text{and} \quad y_{F^c}=\prod_{x_j\in \{x_1,\ldots,x_n\} \setminus F}y_j.
\]
The ideal of $S$ generate by those squarefree monomials $u_F$ with $F \in \Delta$
is called
the \emph{face ideal} of $\Delta$ and is denoted by $J_\Delta$.
As usual we write $I_\Delta$ $(\subset K[x_{1}, \ldots, x_{n}])$
for the Stanley--Reisner ideal (\cite[p.~16]{HH}) of $\Delta$ and
$I(\Delta)$ $(\subset K[x_{1}, \ldots, x_{n}])$ for the facet ideal of $\Delta$.

Let, in general, $I\subset S$ be a squarefree monomial ideal of $S$ with
$I=\Sect_{k=1}^mP_k$ where the ideals $P_k$ are the minimal prime ideals of $I$.
Each $P_k$ is generated by variables.
The \emph{Alexander dual} $I^{\vee}$ of $I$ is defined to be
the ideal of $S$ generated by the squarefree monomials $u_1,\ldots,u_m$, where
$u_k=(\prod_{x_i\in P_k}x_i)(\prod_{y_j\in P_k}y_j)$. In particular, $(I_\Delta)^\vee=I_{\Delta^\vee}$ where $\Delta^\vee$ is the Alexander dual of $\Delta$.

\begin{Theorem}
\label{faceideal}
Let $\Delta$ be a simplicial complex on
$\{x_{1}, \ldots, x_{n}\}$ and $J_{\Delta} \subset S$ the face ideal of $\Delta$.
Then one has
\[
(J_\Delta)^\vee=I(W(\Gamma)),
\]
where $\Gamma$ is the simplicial complex on $\{ y_{1}, \ldots, y_{n} \}$ with
\[
I_{\Delta'}=I(\Gamma),
\]
where
$\Delta'$ is the copy of $\Delta$ on $\{y_{1}, \ldots, y_{n}\}$, i.e.,
$\Delta' = \{ \, \{ \, y_{i} \, : \, x_{i} \in F \, \} \, : \, F \in \Delta \, \}$.
\end{Theorem}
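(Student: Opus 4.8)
The plan is to compute both sides as explicit lists of squarefree monomials in $S$ and check that they coincide. First I would unwind the definition of Alexander dual: since $J_\Delta=\Sect_{k}P_k$ with the $P_k$ the minimal primes, I need to describe these minimal primes. A prime $P$ generated by variables contains $J_\Delta$ if and only if $P$ contains one variable from each generator $u_F=x_Fy_{F^c}$, i.e.\ $P$ meets $\supp(u_F)=F\cup F^c$ for every $F\in\Delta$ (here $F^c$ denotes the $y$-indices outside $F$). So the minimal primes correspond to minimal vertex covers of the hypergraph whose edges are the sets $\{x_i:x_i\in F\}\cup\{y_j:x_j\notin F\}$, $F\in\Delta$. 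Equivalently, writing a candidate prime as generated by $\{x_i:i\in A\}\cup\{y_j:j\in B\}$ for subsets $A,B\subseteq[n]$, the covering condition reads: for every $F\in\Delta$, either $A\cap F\neq\emptyset$ or $B\setminus F\neq\emptyset$; that is, there is \emph{no} $F\in\Delta$ with $F\subseteq[n]\setminus A$ and $F\supseteq B$. The Alexander-dual generator attached to such a prime is $u_{A,B}=\prod_{i\in A}x_i\prod_{j\in B}y_j$.

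Next I would identify the right-hand side. By the displayed equation $I_{\Delta'}=I(\Gamma)$, the facets of $\Gamma$ are exactly the minimal nonfaces of $\Delta'$, i.e.\ (transporting back to $x$-indices) the sets $\{y_j:j\in C\}$ where $C$ is a minimal nonface of $\Delta$. The whisker complex $W(\Gamma)$ adds the edges $\{x_i,y_i\}$, so $I(W(\Gamma))$ is generated by the monomials $\prod_{j\in C}y_j$ for $C$ a minimal nonface of $\Delta$, together with the monomials $x_iy_i$ for $i=1,\dots,n$. The task is therefore to show that the minimal generators of $(J_\Delta)^\vee$ are precisely these monomials.

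For the inclusion $\supseteq$: if $C$ is a minimal nonface of $\Delta$, take $A=\emptyset$, $B=C$; then no $F\in\Delta$ contains $B=C$ (as $C\notin\Delta$), so $(A,B)$ satisfies the covering condition, hence the prime it generates contains $J_\Delta$, and its dual generator $\prod_{j\in C}y_j$ lies in $(J_\Delta)^\vee$. Similarly, for $x_iy_i$ take $A=B=\{i\}$: any $F\in\Delta$ with $F\subseteq[n]\setminus\{i\}$ automatically fails to contain $\{i\}=B$, so again the covering condition holds. For the reverse inclusion, and for minimality, I would start from an arbitrary pair $(A,B)$ satisfying the covering condition and show its dual generator $u_{A,B}$ is divisible by one of the listed monomials. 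If $A\cap B\neq\emptyset$, say $i\in A\cap B$, then $x_iy_i\mid u_{A,B}$ and we are done. If $A\cap B=\emptyset$, I claim $B$ must be a nonface of $\Delta$: otherwise $B\in\Delta$ and $B\subseteq[n]\setminus A$ (since $A\cap B=\emptyset$), giving an $F=B\in\Delta$ violating the covering condition. Then $B$ contains some minimal nonface $C$ of $\Delta$, and $\prod_{j\in C}y_j\mid u_{A,B}$. This shows every generator of $(J_\Delta)^\vee$ is a multiple of a listed monomial, so the two ideals are equal; combined with the $\supseteq$ direction one sees the listed monomials are themselves dual generators, so they are exactly the minimal generators.

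The main obstacle is purely bookkeeping: keeping the correspondence between faces/nonfaces of $\Delta$ on the $x$-vertices, their copies on the $y$-vertices, the switch from Stanley--Reisner ideal $I_{\Delta'}$ to facet ideal $I(\Gamma)$ (which is what converts ``minimal nonface'' into ``facet of $\Gamma$''), and the extra whisker edges $\{x_i,y_i\}$ all consistent. Once the dictionary ``prime of $J_\Delta$ $\leftrightarrow$ pair $(A,B)$ with no $F\in\Delta$ satisfying $B\subseteq F\subseteq[n]\setminus A$'' is set up cleanly, the verification is the short case analysis above.
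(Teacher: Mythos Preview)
Your proposal is correct and follows essentially the same approach as the paper: both arguments identify the minimal generators of $(J_\Delta)^\vee$ with the minimal vertex covers of the hypergraph whose edges are the supports $\{x_i:i\in F\}\cup\{y_j:j\notin F\}$, and then carry out the same case analysis (pure $y$-covers correspond to nonfaces of $\Delta'$, while any cover meeting both $x_i$ and $y_i$ is handled by the whisker $\{x_i,y_i\}$). Your formulation of the covering condition as ``no $F\in\Delta$ with $B\subseteq F\subseteq[n]\setminus A$'' makes the verification slightly more streamlined than the paper's, but the substance is identical.
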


\begin{proof}
Let $\Delta^{\sharp}$ denote the simplicial complex on
$\{ x_{1}, \ldots, x_{n}, y_{1}, \ldots, y_{n} \}$ with
$J_{\Delta} = I(\Delta^{\sharp})$.
It then follows that
a squarefree monomial $x_{i_{1}} \cdots x_{i_{s}}y_{j_{1}} \cdots y_{j_{t}}$
belongs to the minimal system of monomial generators of
$J_\Delta^\vee$ if and only if
$\{x_{i_{1}}, \ldots, x_{i_{s}}, y_{j_{1}}, \ldots, y_{j_{t}}\}$
is a minimal vertex cover (\cite[p.~156]{HH}) of $\Delta^{\sharp}$.

Since each facet of $\Delta^{\sharp}$ contains either $x_{i}$ or $y_{i}$
for $1 \leq i \leq n$, it follows that
$\{ x_{i}, y_{i} \}$ is a minimal vertex cover of $\Delta^{\sharp}$
for $1 \leq i \leq n$.

We claim $F = \{ y_{j} \, : \, j \in B \}$, where $B \subset [n]
= \{ 1, \ldots, n \}$,
is a vertex cover of $\Delta^{\sharp}$ if and only if
$F \not\in \Delta'$.
In fact, if $F \in \Delta'$,
then $\{ x_{j} \, : \, j \in B \} \cup \{ y_{i} \, : \, i \in [n] \setminus B \}$
is a facet of $\Delta^{\sharp}$.  Hence $F$ cannot be a vertex cover of
$\Delta^{\sharp}$.  Conversely suppose that $F \not\in \Delta'$.
Let $\{ x_{j} \, : \, j \in C \} \cup \{ y_{i} \, : \, i \in [n] \setminus C \}$
be a facet of $\Delta^{\sharp}$.  Then $\{ x_{j} \, : \, j \in C \}$ is a face
of $\Delta$.  Since $F \not\in \Delta'$, it follows that $B \not\subset C$.
Thus $B \cap ([n] \setminus C) \neq \emptyset$.  Hence $F$ is a vertex cover of
$\Delta^{\sharp}$.

Finally, suppose that
$G = \{ x_{i} \, : \, i \in A \} \cup \{ y_{j} \, : \, j \in B \}$
is a minimal vertex cover of $\Delta'$,
where $A \subset [n], B \subset [n]$ with $A \cap B = \emptyset$.
We claim $A = \emptyset$.  In fact, if $A \neq \emptyset$, then
$F = \{ y_{j} \, : \, j \in B \}$ cannot be a vertex cover of $\Delta^{\sharp}$.
Hence $F$ must be a face of $\Delta'$.  Then
$\{ x_{i} \, : \, i \in B \} \cup \{ y_{j} \, : \, j \in [n] \setminus B \}$
is a facet of $\Delta^{\sharp}$.  Since $A \cap B = \emptyset$,
it follows that $G$ cannot be a vertex cover of $\Delta'$.

In consequence, the minimal vertex covers of $\Delta^{\sharp}$ are either
$\{ x_{i}, y_{i} \}$ for $1 \leq i \leq n$ or the minimal nonfaces of $\Delta'$.
Since $I_{\Delta'}=I(\Gamma)$, the the minimal nonfaces of $\Delta'$ coincides with
the facets of $\Gamma$.  It then follows that $J_\Delta^\vee=I(W(\Gamma))$,
as desired.
\end{proof}

\begin{Corollary}
\label{always}
Let $\Delta$ be a simplicial complex. Then $J_\Delta$ has a linear resolution.
\end{Corollary}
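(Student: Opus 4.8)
The plan is to deduce the linear resolution of $J_\Delta$ from Theorem~\ref{faceideal} together with the Eagon--Reiner theorem, which states that a squarefree monomial ideal $I$ has a linear resolution if and only if $S/I^\vee$ is Cohen--Macaulay. Applying this criterion here, it suffices to prove that $S/(J_\Delta)^\vee$ is Cohen--Macaulay; by Theorem~\ref{faceideal} this means showing that $S/I(W(\Gamma))$ is Cohen--Macaulay, where $\Gamma$ is the simplicial complex on $\{y_1,\ldots,y_n\}$ with $I_{\Delta'}=I(\Gamma)$.

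First I would describe the facet ideal $I(W(\Gamma))$ explicitly. By construction of the whisker complex, the facets of $W(\Gamma)$ are the facets of $\Gamma$ together with the $n$ whiskers $\{x_i,y_i\}$, so
\[
I(W(\Gamma)) = I(\Gamma) + (x_1y_1,\ldots,x_ny_n).
\]
The key observation is that $I(W(\Gamma))$ is the \emph{polarization} (or rather can be related by a polarization-type argument) to a complete intersection: more precisely, the ideal $(x_1,\ldots,x_n)\cdot$-type manipulation shows that $S/I(W(\Gamma))$ is obtained from $K[y_1,\ldots,y_n]/I(\Gamma)$ by first adjoining variables and then forming a quotient that behaves like adding a regular sequence. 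Concretely, I would argue as follows: the variables $x_1,\ldots,x_n$ do not appear in $I(\Gamma)$, and modulo the monomials $x_iy_i$ one can set up a short regular-sequence argument. The cleanest route is to show directly that $W(\Gamma)$ has a Cohen--Macaulay facet ideal by exhibiting that $S/I(W(\Gamma))$ is a polarization of a zero-dimensional (hence trivially Cohen--Macaulay) ring, or by invoking the known fact (from the whisker-graph literature, e.g.\ \cite{V}, \cite{HHKO}) extended to complexes, that attaching a whisker at every vertex of any simplicial complex $\Gamma$ yields a Cohen--Macaulay facet ideal. I would give the self-contained polarization argument: depolarize each $x_i$ against a fresh copy, reducing $S/I(W(\Gamma))$ to $K[y_1,\ldots,y_n,z_1,\ldots,z_n]/\bigl(I(\Gamma)+(y_1,\ldots,y_n)\bigr)$-type quotient, which is visibly Cohen--Macaulay since it is $K[z_1,\ldots,z_n]$ modulo nothing, a polynomial ring; tracking the polarization shows $x_i-z_i$, $i=1,\ldots,n$, is a regular sequence, so Cohen--Macaulayness descends.

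The main obstacle will be making the polarization bookkeeping precise: one must verify that the element being split off is genuinely a non-zerodivisor at each stage, i.e.\ that the Stanley--Reisner complex of $I(W(\Gamma))$ admits the claimed polarization structure, and that the resulting ring is exactly a polynomial ring rather than merely Cohen--Macaulay by accident. Once Cohen--Macaulayness of $S/I(W(\Gamma))=S/(J_\Delta)^\vee$ is established, Eagon--Reiner immediately gives that $J_\Delta$ has a linear resolution, completing the proof.
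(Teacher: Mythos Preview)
Your overall strategy---apply Eagon--Reiner and show $S/I(W(\Gamma))$ is Cohen--Macaulay via a polarization argument---is exactly the paper's approach. However, the polarization bookkeeping in your sketch is garbled, and as written it does not go through.

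The issue is your depolarized target. You write the depolarization as $K[y_1,\ldots,y_n,z_1,\ldots,z_n]/\bigl(I(\Gamma)+(y_1,\ldots,y_n)\bigr)$ with fresh variables $z_i$ and a claimed regular sequence $x_i-z_i$; but the $z_i$ play no role in $S$ or in $I(W(\Gamma))$, and replacing $x_iy_i$ by $y_i$ is not a depolarization step (the monomial $y_i$ is already squarefree). The correct and much simpler picture is this: $I(W(\Gamma))=I(\Gamma)+(x_1y_1,\ldots,x_ny_n)$ is precisely the polarization of
\[
L=\bigl(I(\Gamma),\,y_1^{2},\ldots,y_n^{2}\bigr)\subset K[y_1,\ldots,y_n],
\]
since polarizing $y_i^{2}$ introduces the new variable $x_i$ and produces $x_iy_i$, while the squarefree ideal $I(\Gamma)$ is unchanged. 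Equivalently, the sequence $x_1-y_1,\ldots,x_n-y_n$ is regular on $S/I(W(\Gamma))$ and the quotient is $K[y_1,\ldots,y_n]/L$. Now $K[y_1,\ldots,y_n]/L$ is zero-dimensional (each $y_i$ is nilpotent), hence trivially Cohen--Macaulay, and Cohen--Macaulayness transfers along polarization. This is the clean two-line argument the paper gives; once you replace your $(y_1,\ldots,y_n)$ by $(y_1^{2},\ldots,y_n^{2})$ and drop the extraneous $z_i$, your proposal becomes correct and coincides with it.
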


\begin{proof}
By applying the Eagon-Reiner Theorem \cite{EG} (see also \cite[Theorem 8.1.9]{HH}) it suffices to show that $I(W(\Gamma))$ is a Cohen--Macaulay ideal. This is a well-known fact:  Notice that $I(W(\Gamma))$ is the polarization of $L=(I(\Gamma), y_1^2,\ldots,y_n^2)$. Since $\dim K[y_1,\ldots,y_n]/L=0$. it follows that $L$ is a Cohen-Macaulay ideal. Hence by \cite[Corollary 1.6.3]{HH}, $I(W(\Gamma))$ is a Cohen--Macaulay ideal as well.
\end{proof}

The next result describes the precise structure of the resolution of $S/J_\Delta$. For a simplicial complex $\Delta$ on $[n]=\{1,\ldots,n\}$ we let $\overline{\Delta}=\{[n]\setminus F\:\; F\in \Delta\}$.

\begin{Theorem}
\label{resolution}
Let $\Delta$ be a simplicial complex of dimension $d-1$ on the vertex set $[n]$. For each integer $j\geq 1$, let $F_j$ be the free $S$-module with basis elements $e_{G,H}$ indexed by  $G\in \Delta$ and $H\in \overline{\Delta}$ satisfying the condition that $|G\sect H|=j-1$ and $G\union H=[n]$. Furthermore, we set $F_0=S$. For each $j=2,\ldots, d$ we define the $S$-linear map
\[
\partial_j\:\; F_j\to F_{j-1}, \quad \text{with} \quad\partial_j(e_{G,H})=\sum_{i\in G\sect H}(-1)^{\sigma(G\sect H,i)}(x_ie_{G\setminus\{i\}, H}-y_ie_{G, H\setminus\{i\}}),
\]
where $\sigma(G\sect H,i)=|\{j\in G\sect H\:\; j<i\}|$.
Then
\[
\begin{CD}
\FF_\Delta\: 0 @>>> F_d @>\partial_d>> F_{d-1} @> \partial_{d-1} >>  \cdots @> \partial_2>> F_1@> \partial_1 >> F_0@>>> 0,
\end{CD}
\]
is the graded free resolution of $S/J_\Delta$, where $\partial_1(e_{G,H})=x_Gy_H$ for all $e_{G,H}\in F_1$.
\end{Theorem}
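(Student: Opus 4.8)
The plan is to realise $\FF_\Delta$ as a cellular resolution supported on a polyhedral subcomplex of the $n$-cube, and then to apply the standard acyclicity criterion for cellular complexes. Give the unit cube $[0,1]^n$ its natural face structure: a face is a map $\phi\colon[n]\to\{0,1,*\}$, of dimension $|\phi^{-1}(*)|$. To $\phi$ attach the pair $G=\phi^{-1}(\{1,*\})$ and $H=\phi^{-1}(\{0,*\})$; then $G\union H=[n]$, $G\sect H=\phi^{-1}(*)$, and $\phi\mapsto(G,H)$ is a bijection from cube faces onto pairs $(G,H)$ with $G\union H=[n]$. Let $X_\Delta$ consist of the faces with $\phi^{-1}(\{1,*\})=G\in\Delta$. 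Passing to a face of $\phi$ can only shrink $\phi^{-1}(\{1,*\})$, so $X_\Delta$ is closed under taking faces and is thus a polyhedral (in particular regular CW) complex. Label the face $(G,H)$ by the monomial $x_Gy_H$. Its vertices are the faces with $G\sect H=\emptyset$, i.e.\ $H=[n]\setminus G$ with $G\in\Delta$, carrying the labels $u_G$, which are exactly the minimal monomial generators of $J_\Delta$; and the label of any face is the lcm of the labels of its vertices.

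Next I would check that the cellular complex $\MF_{X_\Delta}$ of this labelled complex is $\FF_\Delta$ verbatim. Place the free module on the $(j-1)$-cells in homological position $j$ (matching $|G\sect H|=j-1$) and $\ZZ^{2n}$-grade $e_{G,H}$ by $x_Gy_H$, so that $e_{G,H}$ has ordinary degree $|G|+|H|=n+(j-1)$ and the complex is linear, in accordance with Corollary~\ref{always}. Orienting each cube face by the natural order on its set $G\sect H$ of free coordinates (with the convention $\partial[0,1]=\{0\}-\{1\}$), the facet of $(G,H)$ obtained by fixing the free coordinate $i$ to $0$ has incidence number $(-1)^{\sigma(G\sect H,i)}$ and the one obtained by fixing $i$ to $1$ has incidence number $-(-1)^{\sigma(G\sect H,i)}$, while the monomial ratios $x_Gy_H/x_{G\setminus\{i\}}y_H$ and $x_Gy_H/x_Gy_{H\setminus\{i\}}$ are $x_i$ and $y_i$. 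So the cellular differential is exactly $\partial_j$; in particular $\partial_{j-1}\partial_j=0$ is automatic, and $H_0(\FF_\Delta)=S/J_\Delta$ because the image of $\partial_1$ is $(u_G:G\in\Delta)=J_\Delta$.

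It then remains to prove exactness in positive homological degrees, and for this I would invoke the criterion (Bayer--Sturmfels) that $\MF_{X_\Delta}$ resolves $S/J_\Delta$ as soon as, for every monomial $m\in S$, the subcomplex $X_\Delta^{\le m}$ of faces whose label divides $m$ is empty or acyclic over $K$. This subcomplex depends only on $A=\supp_x m$ and $B=\supp_y m$: a face $(G,H)$ of $X_\Delta$ lies in it iff $G\subseteq A$ and $H\subseteq B$; together with $G\union H=[n]$ and $G\in\Delta$ this forces $A\union B=[n]$ and $P:=[n]\setminus B\subseteq G$ (so in particular $P\in\Delta$, else $X_\Delta^{\le m}=\emptyset$), after which $H$ runs freely over all sets with $[n]\setminus G\subseteq H\subseteq B$. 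Writing $[n]=P\sqcup Q\sqcup R$ with $Q=[n]\setminus A$ and $R=A\sect B$ — the $P$-coordinates forced into $G$, the $Q$-coordinates forced into $H$, the $R$-coordinates free — this identifies $X_\Delta^{\le m}$ with $X_{\Delta_0}$ for the simplicial complex $\Delta_0=\{\,G'\subseteq R:P\union G'\in\Delta\,\}$ on $R$. Finally $X_{\Delta_0}$ is contractible whenever nonempty: it contains the all-$0$ vertex $v_0$, and for any face $\psi$ of it the smallest cube face $F(\psi,v_0)$ containing $\psi$ and $v_0$ — the face free on $\psi^{-1}(\{1,*\})$ and $0$ elsewhere — again lies in $X_{\Delta_0}$, its associated subset of $R$ being just $\psi^{-1}(\{1,*\})\in\Delta_0$; hence the straight-line contraction $p\mapsto(1-t)p$ carries $X_{\Delta_0}$ into itself and retracts it onto $v_0$. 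Thus every $X_\Delta^{\le m}$ is empty or contractible, the criterion applies, and $\FF_\Delta$ is a free resolution of $S/J_\Delta$; it is the minimal one, since passing from any face to a facet fixes a free coordinate and so deletes an $x_i$ or a $y_i$ from the label, whence comparable faces never share a label.

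The step I expect to cost the most care is the orientation bookkeeping of the second paragraph — fixing the sign convention on cube faces under which the cellular incidence numbers reproduce the $(-1)^{\sigma(G\sect H,i)}$ and the $x_i$/$y_i$ asymmetry in $\partial_j$ — together with checking in the third paragraph that the single condition $G\in\Delta$ is exactly what makes $X_\Delta$ a subcomplex and keeps each $X_\Delta^{\le m}$ stable under the cone onto $v_0$. A cellular-free alternative is an induction on $n$: split $J_\Delta=y_n J'_{\Delta\setminus n}+x_n J'_{\lk_\Delta(n)}$ along the deletion and link of the vertex $n$ (each living in $2(n-1)$ variables), and build $\FF_\Delta$ from the mapping cone of the comparison map between their resolutions; but verifying that this cone is presented by exactly the $F_j$ and $\partial_j$ above is no less laborious than the sign check.
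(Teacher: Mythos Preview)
Your proof is correct and takes a genuinely different route from the paper's. You realise $\FF_\Delta$ as a cellular resolution supported on a polyhedral subcomplex of the $n$-cube and apply the Bayer--Sturmfels acyclicity criterion, checking that each $X_\Delta^{\le m}$ is star-shaped around the origin; this gives a clean geometric picture, makes $\partial^2=0$ automatic, and yields minimality in the same breath. The paper instead verifies $\partial^2=0$ by a direct sign computation and then proves acyclicity by induction on $|\Delta|$: removing a single facet $F$ from $\Delta$ produces a short exact sequence of complexes $0\to\FF_\Gamma\otimes S\to\FF_\Delta\to\KK\to 0$ whose quotient $\KK$ is (a homological shift of) the Koszul complex on $(y_i)_{i\in F}$, and the long exact homology sequence closes the induction. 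Your alternative sketch via deletion and link of a vertex is in the same inductive spirit but not quite the paper's move: the paper peels off one facet at a time rather than one vertex, which makes the cokernel a Koszul complex on the nose and sidesteps any mapping-cone bookkeeping. Your cellular approach is more conceptual and situates $\FF_\Delta$ in a broader family of cube-supported resolutions; the paper's argument is more self-contained, needing nothing beyond Koszul complexes and long exact sequences.
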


\begin{proof}
We first show that $\FF$ is a complex. One immediately verifies that $\partial_1\circ \partial_2=0$. Now let $j>2$ and $e_{G,H}\in F_j$. Set $L=G\sect H$. Then
\begin{eqnarray*}
&&\partial_j(\partial_{j-1}(e_{G,H}))=\partial_{j-1}(\sum_{i\in L}(-1)^{\sigma(L,i)}(x_ie_{G\setminus\{i\}, H}-y_ie_{G, H\setminus\{i\}}))\\
&=&\sum_{i\in L}(-1)^{\sigma(L,i)}[x_i(\sum_{k\in L\setminus\{i\}}(-1)^{\sigma(L\setminus\{i\},k)}(x_k e_{G\setminus\{i,k\},H}-y_ke_{G\setminus\{i\},H\setminus \{k\}}))\\
&-& y_i(\sum_{k\in L\setminus \{i\}}(-1)^{\sigma(L\setminus\{i\},k)}(x_k e_{G\setminus\{k\},H\setminus\{i\}}-y_ke_{G,H\setminus \{i,k\}}))]\\
&=& \sum_{i,k\in L,i<k}\sigma_{ik} (x_ix_ke_{G\setminus\{i,k\},H}-x_iy_ke_{G_\setminus\{i\},H\setminus \{k\}} +y_iy_ke_{G,H\setminus\{i,k\}}).
\end{eqnarray*}
where $\sigma_{ik}=(-1)^{\sigma(L,i)+\sigma(L\setminus\{i\},k)}+(-1)^{\sigma(L,k)+\sigma(L\setminus\{k\},i)}$. Since $\sigma_{ik}=0$ for all $i<k$ it follows that $\partial_j(\partial_{j-1}(e_{G,H}))=0$, as desired.

We will prove the acyclicity of $\FF$ be induction on $|\Delta|$.  The assertion is trivial if $\Delta=\{\emptyset, \{1\}\}$. Now let $n>1$, and let $F$ be a facet of $\Delta$. We set   $\Gamma=\Delta\setminus\{F\}$. We may assume that $\Gamma$ is a simplicial complex on the vertex set $[n']$ where $n'\leq n$.  By induction hypothesis,  $\FF_\Gamma$ is a graded minimal free $S'$-resolution of $S'/J_\Gamma$, where $S'=K[x_1,\ldots,x_{n'},y_1,\ldots,y_{n'}]$. Thus $\GG=\FF_\Gamma\tensor_{S'} S$ is a graded minimal free $S$-resolution of $S/J_\Gamma S$, and we obtain an exact sequence of complexes
\begin{eqnarray}
\label{exact}
\begin{CD}
0 @>>>\GG @>\varphi >> \FF_\Delta @>\epsilon>> \KK\to 0.
\end{CD}
\end{eqnarray}
For all $H\subset [n']$ we set $H'=H\union \{n'+1,\ldots,n\}$. Then $\varphi\: \GG\to \FF_\Delta$ is defined by  $\varphi(e_{G,H})=e_{G,H'}$  for all $e_{G,H}\in \GG$.   Furthermore,  $\KK=F_\Delta/\varphi(\GG)$.

One verifies that $K_j$ is a free module admitting the  basis $\bar{e }_{F,H}=\epsilon(e_{F,H})$ with  $H\subset [n]$ such that $|F\sect H|=j-1$ and $F\union H=[n]$. Denote by $\delta$ the differential of $\KK$. Then
\[
\delta_j(\bar{e }_{F,H})=-\sum_{i\in F\sect H}(-1)^{\sigma(F\sect H,i)}y_i\bar{e}_{F,H\setminus\{i\}}.
\]
Thus we see that $\KK$ is isomorphic to the Koszul complex attached to  the sequence $(y_i)_{i\in F}$,  homologically shifted by $1$. In particular, it follows that $H_j(\KK)=0$ for $j\neq 1$, while $H_1(\KK)=S/(y_i: i\in F)$.

Thus from the long exact sequence attached to (\ref{exact}) we obtain that $H_j(\FF_\Delta)=0$ for $j>1$ by using that $\GG$ is acyclic by our induction hypothesis. Furthermore, we obtain the exact sequence
\[
0\To H_1(\FF_C)\To H_1(\KK)\To H_0(\GG) \To H_0(\FF_C)\To 0.
\]
Since $H_0(\GG)= S/J_\Gamma S$ and $H_0(\FF_\Delta)=S/J_{\Delta}$ we see that $\Ker (H_0(\GG) \To H_0(\FF_C))=J_\Delta/J_\Gamma S$. Now $J_\Delta/J_\Gamma S\iso S/(J_\Gamma S: x_Fy_{[n]\setminus F})=S/(y_i: i\in F)=H_1(\KK)$. This proves that $H_1(\FF_C)=0$ and completes the proof of the theorem.
\end{proof}

\begin{Corollary}
\label{betti}
Let $\Delta$ be a simplicial complex of dimension $d-1$ with  $f$-vector $(f_{-1},f_0,\ldots,f_{d-1})$. Then
\[
\beta_j(J_\Delta)=\sum_{i=-1}^{d-1}{i+1\choose j}f_i.
\]
In particular, $\projdim J_\Delta=\dim \Delta+1$.
\end{Corollary}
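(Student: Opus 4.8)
The plan is to read the graded Betti numbers directly off the explicit minimal free resolution $\FF_\Delta$ of $S/J_\Delta$ constructed in Theorem~\ref{resolution}. That resolution is minimal, since every nonzero entry of each of its differentials lies in the graded maximal ideal of $S$; consequently $\beta_{j+1}(S/J_\Delta)=\rank F_{j+1}$ for every $j\geq 0$, and as $\beta_j(J_\Delta)=\beta_{j+1}(S/J_\Delta)$ the whole computation comes down to counting the basis elements $e_{G,H}$ of $F_{j+1}$, namely the pairs $(G,H)$ with $G\in\Delta$, $H\in\overline\Delta$, $|G\cap H|=j$ and $G\cup H=[n]$.

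The crucial observation is that giving such a pair $(G,H)$ is the same as giving $G\in\Delta$ together with an arbitrary $j$-subset $T$ of $G$. Indeed, $G\cup H=[n]$ and $|G\cap H|=j$ force $H=([n]\setminus G)\cup T$ with $T:=G\cap H$; then $[n]\setminus H=G\setminus T\subseteq G\in\Delta$, so $[n]\setminus H\in\Delta$ automatically, i.e.\ the hypothesis $H\in\overline\Delta$ is no restriction at all once $G\in\Delta$ and $G\cup H=[n]$. Conversely, for every $G\in\Delta$ and every $T\subseteq G$ with $|T|=j$ the pair $(G,\,([n]\setminus G)\cup T)$ is an admissible index of $F_{j+1}$. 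Hence $\rank F_{j+1}=\sum_{G\in\Delta}\binom{|G|}{j}$, and grouping the faces of $\Delta$ by dimension (a face of dimension $i$ has $i+1$ vertices, and there are $f_i$ of them) gives $\beta_j(J_\Delta)=\sum_{i=-1}^{d-1}\binom{i+1}{j}f_i$.

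For the last assertion, note that $\binom{i+1}{j}\neq 0$ forces $j\leq i+1\leq d$, so $\beta_j(J_\Delta)=0$ for $j>d$, whereas $\beta_d(J_\Delta)=\binom{d}{d}f_{d-1}=f_{d-1}\geq 1$ because $\dim\Delta=d-1$. Therefore $\projdim J_\Delta=d=\dim\Delta+1$.

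I do not foresee a genuine obstacle. The substance of the proof is the combinatorial identification of the indexing set of $F_{j+1}$ --- the only mildly surprising point being that the condition $H\in\overline\Delta$ is vacuous once $G\in\Delta$ and $G\cup H=[n]$ --- after which the Betti-number formula is just a bookkeeping of faces by dimension, and the projective dimension statement is immediate from the binomial support of that sum together with $f_{d-1}\geq 1$.
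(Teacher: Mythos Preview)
Your proof is correct and is precisely the argument the paper leaves to the reader: the corollary is stated without proof, immediately following Theorem~\ref{resolution}, which already identifies $\FF_\Delta$ as the minimal free resolution of $S/J_\Delta$. Your key observation---that the constraint $H\in\overline{\Delta}$ is automatic once $G\in\Delta$ and $G\cup H=[n]$, because $[n]\setminus H=G\setminus T$ is a subface of $G$---is exactly what is needed to reduce the count of basis elements of $F_{j+1}$ to $\sum_{G\in\Delta}\binom{|G|}{j}$, and the rest is bookkeeping.
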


\section{Whisker complexes and shellability}

 In this section we show that the face ideal of a simplicial complex does not only have  a linear resolution but  even linear quotients.

 \begin{Theorem}
 \label{linearquotients}
 Let $\Delta$ be a simplicial complex. Then $J_\Delta$ has linear quotients.
 \end{Theorem}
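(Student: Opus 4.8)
The plan is to exhibit an explicit order on the generators $u_F = x_F y_{F^c}$, $F \in \Delta$, and verify that each successive colon ideal is generated by a subset of the variables. Since $J_\Delta$ is generated in the single degree $n$, having linear quotients is equivalent to finding an ordering $u_{F_1}, \ldots, u_{F_m}$ of $G(J_\Delta)$ such that for every $k$ the ideal $(u_{F_1}, \ldots, u_{F_{k-1}}) : u_{F_k}$ is generated by a subset of $\{x_1,\ldots,x_n,y_1,\ldots,y_n\}$. A natural choice is to order the faces by decreasing dimension, and within each dimension by some total order (say, reverse lexicographic on the associated subsets of $[n]$); equivalently one may try ordering so that smaller faces come first, and take the dual order. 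I would first compute, for two faces $F$ and $F'$, the monomial $u_{F'} : u_F = \operatorname{lcm}(u_{F'},u_F)/u_F$; since $u_F$ and $u_{F'}$ have the same degree, this colon monomial records exactly the variables in which $u_{F'}$ strictly dominates $u_F$, namely $\{x_i : i \in F' \setminus F\} \cup \{y_j : j \in F \setminus F'\}$, a squarefree monomial of degree $|F' \triangle F|$.

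The key step is then to show: for a fixed $F = F_k$ and any earlier face $F' = F_\ell$ with $\ell < k$, there is a \emph{single variable} $v$ dividing $u_{F'}:u_F$ such that $v = u_{F''}:u_F$ for some still-earlier face $F''$. Concretely, if $F' \setminus F \neq \emptyset$, pick $i \in F' \setminus F$ and set $F'' = F \cup \{i\}$ (a face, since it is contained in... no — rather use that $F'' \subseteq F'$ up to an element, or better: take $F'' = F' $ itself is earlier but we need the colon to be a single variable, so instead descend). The cleaner route: if $|F'| \ge |F|$ we are in the "decreasing dimension" regime and every earlier face has dimension $\ge \dim F$; choosing $i \in F'\setminus F$, the face $F'' := (F \cup \{i\})$ has dimension $\ge \dim F$... this only works if $\dim F'' $ is large enough, which is why one should instead order by \emph{increasing} dimension and argue with $j \in F \setminus F'$, setting $F'' = F \setminus \{j\}$, which is a face of dimension $\dim F - 1$, hence strictly earlier, with $u_{F''} : u_F = y_j$ dividing $u_{F'}:u_F$. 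It remains to handle the case $F \setminus F' = \emptyset$, i.e. $F \subsetneq F'$: then $F' \setminus F \neq \emptyset$, pick $i \in F'\setminus F$, and take $F'' = F \cup \{i\} \subseteq F'$, which is a face of dimension $\dim F + 1 > \dim F$, hence \emph{later} in an increasing-dimension order — so this case forces us to be more careful about the within-dimension order. I expect the resolution is to order faces by increasing cardinality and, among faces of equal cardinality, arbitrarily (or lexicographically), and then when $F \subsetneq F'$ the relevant earlier generator is obtained by removing from $F'$ an element not in $F$ one at a time down to $F$; the first such removal gives a face $F_1'$ with $|F_1'| = |F'| - 1 \ge |F|$, $F \subseteq F_1'$, and $u_{F_1'} : u_F$ divides $u_{F'} : u_F$ and is supported on $x$-variables only of degree $|F_1'| - |F| < |F'| - |F|$; iterating reduces to the base case. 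The main obstacle is exactly this bookkeeping: ensuring that in \emph{every} case the chosen $F''$ is genuinely earlier in the chosen order and that $u_{F''}:u_F$ is a single variable dividing $u_{F'}:u_F$.

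An alternative, and probably the slickest, approach uses Theorem~\ref{faceideal}: $(J_\Delta)^\vee = I(W(\Gamma))$, and a monomial ideal has linear quotients if and only if its Alexander dual is (componentwise linear and) the Stanley--Reisner ideal of a \emph{shellable} complex — more precisely, by a theorem of Herzog--Hibi--Zheng, $J_\Delta$ has linear quotients iff $S/(J_\Delta)^\vee$ has a shellable... this is not quite a black box at the level of generality needed. So I would instead fall back on the direct combinatorial argument above. A third option: build linear quotients by induction on $|\Delta|$ paralleling the proof of Theorem~\ref{resolution} — remove a facet $F$, get $\Gamma = \Delta \setminus \{F\}$, note $J_\Gamma$ has linear quotients by induction, and check that appending the single generator $u_F$ last preserves the property, which amounts to showing $(J_\Gamma : u_F)$ is generated by variables; but $(J_\Gamma : u_F)$ need not be generated by variables unless the facet is chosen well (a shelling-type order of facets is needed). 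I would therefore present the explicit order: \textbf{order the faces of $\Delta$ so that $F$ precedes $F'$ whenever $|F| < |F'|$, breaking ties arbitrarily}, and prove that for this order, given $F' $ before $F$, the variable $x_i$ for a suitable $i \in F'\setminus F$ or $y_j$ for a suitable $j \in F\setminus F'$ lies in the colon ideal $(u_{F'}):u_F$ and equals $(u_{F''}):u_F$ for an earlier $F''$, completing the verification. The one real subtlety — the case $F \subsetneq F'$, where naively the "intermediate" face has larger dimension — is resolved by instead picking $j\in F\setminus F'$; and if no such $j$ exists because $F\subsetneq F'$, then since $F$ comes before $F'$ only when $|F|<|F'|$, we may reach $F$ from $F'$ by successively deleting one element of $F'\setminus F$ at a time, at each stage staying above $F$ in cardinality and hence later than or equal to $F'$... no, later. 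Hence in this sub-case one must take $F'' = F'\setminus\{i\}$ which is earlier than $F'$ (smaller cardinality) — good — and check $u_{F''}:u_F = x_i$ divides $u_{F'}:u_F$ — also good, since $i\in F'\setminus F \supseteq F''\setminus F$... wait $F'' = F'\setminus\{i\}$ so $i\notin F''$. The correct pick is to \emph{add} to $F$ rather than subtract from $F'$, but adding increases dimension. I expect the paper resolves this by the opposite convention — ordering by \emph{decreasing} cardinality — and the honest statement of the main obstacle is: \textbf{choosing the right total order so that the intermediate face witnessing each colon relation is always strictly earlier}; once the order is fixed correctly the rest is a short computation of $\operatorname{lcm}$'s.
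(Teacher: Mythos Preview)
Your approach is the paper's approach, but you trip yourself up on a phantom case. The paper orders the generators so that $u_G$ precedes $u_F$ whenever $G\subsetneq F$ (your ``increasing cardinality'' order is one such total order), and the entire argument is the two lines you already wrote for the case $F\setminus F'\neq\emptyset$: pick $j\in F\setminus F'$, set $F''=F\setminus\{j\}\subsetneq F$, so $u_{F''}$ is earlier and $(u_{F''}):u_F=(y_j)$ with $y_j\mid y_{F\setminus F'}\mid (u_{F'}):u_F$.

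The point you missed is that the ``remaining'' case $F\setminus F'=\emptyset$ is \emph{vacuous}. If $F'$ is earlier than $F$ in any order extending inclusion and $F\subseteq F'$, then either $F=F'$ (impossible) or $F\subsetneq F'$, which forces $u_F$ to precede $u_{F'}$, contradicting that $F'$ is earlier. So there is nothing to do beyond the single-line computation you already gave; all of your subsequent attempts to reach $F$ from $F'$ by adding or deleting elements, and your final guess that the paper uses decreasing cardinality, are unnecessary detours. Once you note that $u_{F'}$ earlier forces $F\setminus F'\neq\emptyset$, the proof is complete.
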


 \begin{proof}
 We choose any total order of the generators of $J_\Delta$ with the property that $u_G>u_F$ if $G\subset F$, and claim that $J_\Delta$ has linear quotients with respect to this order of the generators.

 Indeed, let $F\in \MF(\Delta)$,  and let $J(F)$ be the ideal generated by all $G$ with $u_G>u_F$. For any $G\in \MF(\Delta)$ one has
 \[
 (u_G):u_F=\frac{u_G}{\gcd(u_G,u_F)}=x_{G\setminus F}y_{F\setminus G}.
 \]
Now let $u_G\in J(F)$. Then $F\setminus G\neq \emptyset$. Let  $x_j\in F\setminus G$, and  let $H=F\setminus \{x_j\}$. Then $u_H>u_F$ and $(u_H):u_F=(y_j)$. Since $y_j$ divides  $y_{F\setminus G}$ it follows that $y_j$ divides $(u_G):u_F$, as desired.
 \end{proof}

\begin{Corollary}
 \label{shellable}
Let $\Gamma$ be a simplicial complex. Then the independence complex of  $W(\Gamma)$  is shellable.
\end{Corollary}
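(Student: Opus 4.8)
The plan is to deduce the corollary formally from Theorems~\ref{faceideal} and~\ref{linearquotients}, using the standard dictionary between shellability of a simplicial complex and linear quotients of the Alexander dual of its Stanley--Reisner ideal.

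Write $\Sigma$ for the independence complex of $W(\Gamma)$, so that by definition $I_\Sigma=I(W(\Gamma))$, where $I_\Sigma$ denotes the Stanley--Reisner ideal of $\Sigma$. Recall the well-known criterion (see \cite[Theorem~8.2.5]{HH}): a simplicial complex $\Sigma$ is shellable if and only if the Alexander dual ideal $(I_\Sigma)^\vee=I_{\Sigma^\vee}$ has linear quotients. Thus it suffices to show that $(I(W(\Gamma)))^\vee$ has linear quotients.

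To do this I would exhibit $(I(W(\Gamma)))^\vee$ as a face ideal. After relabelling the variables so that $\Gamma$ is a complex on $\{y_1,\dots,y_n\}$, let $\Delta'$ be the independence complex of $\Gamma$, i.e.\ the complex on $\{y_1,\dots,y_n\}$ with $I_{\Delta'}=I(\Gamma)$, and let $\Delta$ be the copy of $\Delta'$ on $\{x_1,\dots,x_n\}$. Theorem~\ref{faceideal} then gives $(J_\Delta)^\vee=I(W(\Gamma))$. Since Alexander duality is an involution on squarefree monomial ideals, applying $(-)^\vee$ once more yields $(I(W(\Gamma)))^\vee=((J_\Delta)^\vee)^\vee=J_\Delta$. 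By Theorem~\ref{linearquotients} the ideal $J_\Delta$ has linear quotients; hence $(I_\Sigma)^\vee$ has linear quotients, and therefore $\Sigma$ is shellable.

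Granting Theorems~\ref{faceideal} and~\ref{linearquotients}, I expect no genuine obstacle: the argument is purely formal. The only points requiring (routine) care are the bookkeeping of the relabelling $x_i\leftrightarrow y_i$ needed to align with the conventions of Theorem~\ref{faceideal}, and recording that Alexander duality is involutive. If one prefers to avoid the general criterion, one can instead transport the explicit order of $G(J_\Delta)$ used in the proof of Theorem~\ref{linearquotients} (namely $u_G>u_F$ whenever $G\subsetneq F$) through the correspondence of Theorem~\ref{faceideal} into a shelling order of the facets of $\Sigma$; checking directly that this order is a shelling would then be the one computation to perform.
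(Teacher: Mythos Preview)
Your proposal is correct and matches the paper's (implicit) argument: the corollary is stated without proof immediately after Theorem~\ref{linearquotients}, and the intended deduction is exactly the one you give---identify $(I(W(\Gamma)))^\vee$ with a face ideal $J_\Delta$ via Theorem~\ref{faceideal}, invoke Theorem~\ref{linearquotients} to obtain linear quotients, and conclude shellability from the standard criterion \cite[Proposition~8.2.5]{HH}. Your bookkeeping with the $x_i\leftrightarrow y_i$ relabelling and the involutivity of Alexander duality is accurate and fills in precisely the details the paper leaves to the reader.
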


Theorem~\ref{linearquotients} can be generalized as follows.

\begin{Theorem}
\label{general}
Let $\MS$ be a non-empty  collection of subsets of $\{x_1,\ldots,x_n\}$ satisfying the following conditions:
\begin{enumerate}
\item[{\em (i)}] if $F,G\in \MS$, then $F\sect G\in \MS$;

\item[{\em (ii)}] for $F, G\in \MS$ with $G\subset F$  there exists $x_i\in F\setminus G$ such that $ F\setminus\{x_i\}\in \MS$.
\end{enumerate}
Then $I_\MS$ has linear quotients.
\end{Theorem}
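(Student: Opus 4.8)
The plan is to follow the proof of Theorem~\ref{linearquotients} almost verbatim, the only new point being how condition~(ii) gets invoked. We fix a total order of the generators $u_F$ ($F\in\MS$) refining the partial order by reverse inclusion, so that $u_G>u_F$ whenever $G\subsetneq F$, and claim that $I_\MS$ has linear quotients with respect to this order. As in Theorem~\ref{linearquotients}, for $F\in\MS$ write $J(F)$ for the ideal generated by all $u_G$ with $u_G>u_F$; if $J(F)=0$ there is nothing to prove, so assume not. For any $G\in\MS$ one has the colon formula $(u_G):u_F=x_{G\setminus F}\,y_{F\setminus G}$. To prove linear quotients it suffices to show that for every generator $u_G$ of $J(F)$ there is a variable which divides $(u_G):u_F$ and which itself lies in $J(F):u_F$; for then the monomials $(u_G):u_F$ generating $J(F):u_F$ all lie in the ideal generated by the variables in $J(F):u_F$, so $J(F):u_F$ is generated by variables.

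So fix a generator $u_G$ of $J(F)$. Since $u_G>u_F$ we have $G\neq F$, and moreover $F\not\subseteq G$ (otherwise $F\subsetneq G$, forcing $u_F>u_G$); hence $F\setminus G\neq\emptyset$. Here is the one place the argument differs from Theorem~\ref{linearquotients}: $G$ need not be comparable to $F$, so we cannot apply~(ii) to the pair $(F,G)$. Instead we pass to $G':=F\cap G$, which by condition~(i) again belongs to $\MS$ and which satisfies $G'\subsetneq F$ because $F\not\subseteq G$. Applying condition~(ii) to the pair $F,G'$ produces an element $x_i\in F\setminus G'$ with $H:=F\setminus\{x_i\}\in\MS$; since $F\setminus G'=F\setminus(F\cap G)=F\setminus G$, we in fact have $x_i\in F\setminus G$.

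It remains to verify that $y_i$ has the desired properties. Because $H\subsetneq F$ we have $u_H>u_F$, so $u_H$ is one of the generators of $J(F)$, and by the colon formula $(u_H):u_F=x_{H\setminus F}\,y_{F\setminus H}=y_i$; thus $y_i\in J(F):u_F$. On the other hand $x_i\in F\setminus G$ gives $y_i\mid y_{F\setminus G}\mid (u_G):u_F$. Hence $J(F):u_F$ is generated by variables for every $F\in\MS$, and $I_\MS$ has linear quotients. The only genuine subtlety is this passage from $G$ to $F\cap G$, which is exactly what condition~(i) is there to allow; once that is in place, the rest is the computation already carried out for Theorem~\ref{linearquotients}.
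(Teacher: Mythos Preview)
Your proof is correct and follows essentially the same approach as the paper's: choose an order refining reverse inclusion, pass from $G$ to $F\cap G$ via condition~(i), then apply condition~(ii) to extract $x_i\in F\setminus G$ with $F\setminus\{x_i\}\in\MS$, yielding the needed variable $y_i$. The only cosmetic difference is that the paper packages the conclusion by naming the set $\MN=\{i: F\setminus\{x_i\}\in\MS\}$ and asserting $(u_G: u_G>u_F):u_F=(y_i: i\in\MN)$, whereas you verify the linear-quotient criterion generator by generator; the underlying computation is identical.
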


\begin{proof}
 We choose any total order of the generators of $J_\Delta$ with the property that $u_G>u_F$ if $G\subset F$, and claim that $J_\MS$ has linear quotients with respect to this order of the generators. Let $u_G>u_F$,  and let $\MN=\{i\:\; F\setminus \{x_i\}\in \MS\}$. Then, by (ii), $\MN\neq\emptyset$. We claim that
 \[
 (u_G\:\; u_G>u_F)=(y_i\:\; i\in\MN).
 \]
To see why this is true, let $u_G>u_F$. Then, by (i), $H=G\sect F$ belongs to $\MS$, and by (ii) there exists $i\in\MN$ such that $x_i\in F\setminus H$.  Then $x_i\not G$,  and hence $y_i$ divides $(u_G):u_F$. Since $(u_{F_\setminus\{x_i\}}):u_F=(y_i)$, we are done.
\end{proof}

\section{Chain ideals and antichain ideals}

In this section we consider special classes of face ideals arising from finite partially ordered sets (posets, for short).
Let $P=\{p_1,\ldots,p_n\}$ be a finite poset, and $S=K[x_1,\ldots, x_n,y_1,\ldots,y_n]$ the polynomial ring in $2n$ variables over the field $K$. For each chain $\alpha$ of $P$ we define the monomial $u_\alpha\in S$ by setting
\[
u_\alpha=(\prod_{p_i\in \alpha}x_i)(\prod_{p_j\not\in \alpha}y_j),
\]
and let $I_C(P)$ be the ideal generated by the monomials $u_\alpha$ where $\alpha$ is a chain of $P$. We call $I_C(P)$ the \emph{chain ideal} of $P$.

Similarly, for each antichain $\beta$ of $P$ we define the monomial $u_\beta\in S$ by setting
\[
u_\beta=(\prod_{p_i\in \beta}x_i)(\prod_{p_j\not\in \beta}y_j),
\]
and let $I_A(P)$ be the ideal generated by the monomials $u_\beta$ where $\beta$ is an antichain of $P$. We call $I_A(P)$ the \emph{antichain ideal} of $P$.

Recall that the {\em comparability graph} of $P$ is a finite simple graph
${\mathcal C}(P)$ on $[n]$ whose edges are those subsets
$\{ i, j \}$ such that $p_{i}$ and $p_{j}$ are comparable in $P$, i.e.,
$\{ p_{i}, p_{j} \}$ is a chain of $P$.
Similarly, the {\em incomparability graph} of $P$ is a finite simple graph
${\mathcal A}(P)$ on $[n]$ whose edges are those subsets
$\{ i, j \}$ such that $p_{i}$ and $p_{j}$ are incomparable in $P$, i.e.,
$\{ p_{i}, p_{j} \}$ is an antichain of $P$.

\begin{Theorem}
\label{chain}
Let $P$ be a finite poset.

{\em (a)} The Alexander dual of the chain ideal $I_C(P)$ is the edge ideal of the whisker graph of the incomparability graph of $P$.

{\em (b)} The Alexander dual of the antichain ideal $I_A(P)$ is the edge ideal of the whisker graph of the comparability graph of $P$.
\end{Theorem}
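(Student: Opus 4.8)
The plan is to exhibit $I_C(P)$ and $I_A(P)$ as face ideals of suitable simplicial complexes on $\{x_1,\dots,x_n\}$ and then invoke Theorem~\ref{faceideal}.

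For part~(a), note that since every subset of a chain of $P$ is again a chain, the collection
\[
\Delta_C=\{\,\{x_i\:\; p_i\in\alpha\}\:\; \alpha\text{ a chain of }P\,\}
\]
is a simplicial complex on $\{x_1,\dots,x_n\}$; comparing the monomials $u_\alpha$ and $u_F$ shows immediately that $I_C(P)=J_{\Delta_C}$. A subset of $P$ is a chain precisely when every two of its elements are comparable, so the minimal nonfaces of $\Delta_C$ are exactly the $2$-element sets $\{x_i,x_j\}$ with $p_i,p_j$ incomparable, i.e.\ the edges of the incomparability graph ${\mathcal A}(P)$. Hence the Stanley--Reisner ideal $I_{\Delta_C}$ equals the edge ideal $I({\mathcal A}(P))$, where ${\mathcal A}(P)$ is regarded as a one-dimensional simplicial complex.

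Now apply Theorem~\ref{faceideal} with $\Delta=\Delta_C$. By the previous paragraph, the simplicial complex $\Gamma$ on $\{y_1,\dots,y_n\}$ characterized by $I_{(\Delta_C)'}=I(\Gamma)$ is simply ${\mathcal A}(P)$ with vertices relabelled $y_1,\dots,y_n$. Since the whisker complex $W(\Gamma)$ of a graph $\Gamma$ coincides with its whisker graph --- the additional sets $\{x_i,y_i\}$ are genuine facets because every edge of $\Gamma$ already has cardinality $2$ --- Theorem~\ref{faceideal} gives
\[
(I_C(P))^\vee=I(W({\mathcal A}(P))),
\]
which is the edge ideal of the whisker graph of the incomparability graph of $P$, proving (a).

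Part~(b) is established in exactly the same manner: the antichains of $P$ determine a simplicial complex $\Delta_A$ on $\{x_1,\dots,x_n\}$ with $I_A(P)=J_{\Delta_A}$; a set is an antichain iff each of its pairs is, so the minimal nonfaces of $\Delta_A$ are the edges of the comparability graph ${\mathcal C}(P)$ and $I_{\Delta_A}=I({\mathcal C}(P))$; Theorem~\ref{faceideal} then yields $(I_A(P))^\vee=I(W({\mathcal C}(P)))$. I do not anticipate any real obstacle: the argument is a chain of definition-unwindings, the only points worth stating explicitly being the identification of the auxiliary complex $\Gamma$ from Theorem~\ref{faceideal} with the (in)comparability graph and the remark that $W(-)$ applied to a graph is the ordinary whisker graph, both of which rest on the fact that the relevant Stanley--Reisner ideals are generated in degree $2$.
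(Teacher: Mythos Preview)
Your proof is correct and follows essentially the same approach as the paper: identify $I_C(P)$ (resp.\ $I_A(P)$) as the face ideal of the order complex (resp.\ antichain complex), observe that this is a flag complex whose Stanley--Reisner ideal is the edge ideal of the incomparability (resp.\ comparability) graph, and invoke Theorem~\ref{faceideal}. The only addition you make beyond the paper's argument is the explicit remark that for a one-dimensional $\Gamma$ the whisker complex $W(\Gamma)$ is a graph, so that $I(W(\Gamma))$ is genuinely an edge ideal; this is a reasonable clarification.
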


\begin{proof}
(a) Let $\Delta$ be a simplicial complex on $\{y_{1}, \ldots, y_{n}\}$
whose faces are those $F \subset \{y_{1}, \ldots, y_{n}\}$
with $\{ p_{i} \, : \, y_{i} \in F \}$ is a chain of $P$.
Theorem \ref{faceideal} says that the Alexander dual $I_C(P)^{\vee}$ of
$I_C(P)$ coincides with $I(W(\Gamma))$, where $\Gamma$ is a simplicial complex on
$\{y_{1}, \ldots, y_{n}\}$ with $I_{\Delta} = I(\Gamma)$.
A subset $F \subset \{y_{1}, \ldots, y_{n}\}$ is nonface of $\Delta$
if and only if an antichain of $P$ is contained in $F$.
Thus the minimal nonfaces of
$\Delta$, which coincides with the facets of $\Gamma$ are those subset
$\{y_{i}, y_{j}\}$ such that $\{ p_{i}, p_{j} \}$ is an antichain of $P$.
Thus $I(\Gamma)$ is the edge ideal of the incomparagraph of $P$.
Hence $I(W(\Gamma))$ is the edge ideal of the whisker graph
of the incomparability graph of $P$, as desired.

(b) Let $\Delta$ be a simplicial complex on $\{y_{1}, \ldots, y_{n}\}$
whose faces are those $F \subset \{y_{1}, \ldots, y_{n}\}$
with $\{ p_{i} \, : \, y_{i} \in F \}$ is an antichain of $P$.
Theorem \ref{faceideal} says that the Alexander dual $I_A(P)^{\vee}$ of
$I_A(P)$ coincides with $I(W(\Gamma))$, where $\Gamma$ is a simplicial complex on
$\{y_{1}, \ldots, y_{n}\}$ with $I_{\Delta} = I(\Gamma)$.
A subset $F \subset \{y_{1}, \ldots, y_{n}\}$ is nonface of $\Delta$
if and only if a chain of $P$ is contained in $F$.
Thus the minimal nonfaces of
$\Delta$, which coincides with the facets of $\Gamma$ are those subset
$\{y_{i}, y_{j}\}$ such that $\{ p_{i}, p_{j} \}$ is a chain of $P$.
Thus $I(\Gamma)$ is the edge ideal of the comparagraph of $P$.
Hence $I(W(\Gamma))$ is the edge ideal of the whisker graph
of the comparability graph of $P$, as desired.
\end{proof}

The {\em Dilworth number} of a finite poset $P$ is the least number of chains
into which $P$ can be partitioned.  Dilworth's theorem \cite{D}
guarantees that the Dilworth number of $P$ is equal to the maximal cardinality
of the antichains of $P$.
%
%
\begin{Corollary}
The chain ideal and the antichain ideal of a finite poset have a linear resolution. Moreover, $\projdim I_C(P)=\rank P+1$  while $\projdim I_A(P)$ is the Dilworth number of $P$.
\end{Corollary}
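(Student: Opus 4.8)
The plan is to recognize both $I_C(P)$ and $I_A(P)$ as face ideals $J_\Delta$ and then to read off everything from Corollary~\ref{always}, Corollary~\ref{betti}, and the Dilworth theorem recalled above. First I would note that the collection $\Delta_C$ of all chains of $P$ and the collection $\Delta_A$ of all antichains of $P$ are each simplicial complexes on the vertex set $\{x_1,\ldots,x_n\}$ (a subset of a chain is a chain, a subset of an antichain is an antichain, and $\emptyset$ lies in both). Comparing the definitions of $u_\alpha$ and $u_\beta$ given at the beginning of this section with the definition of $u_F$ in Section~1, we get $I_C(P)=J_{\Delta_C}$ and $I_A(P)=J_{\Delta_A}$. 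Hence Corollary~\ref{always} applies verbatim and yields that both $I_C(P)$ and $I_A(P)$ have a linear resolution.

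For the projective dimensions, Corollary~\ref{betti} gives $\projdim J_\Delta=\dim\Delta+1$, so it remains only to identify $\dim\Delta_C$ and $\dim\Delta_A$. A facet of $\Delta_C$ is a maximal chain of $P$, which has $\rank P+1$ elements, so $\dim\Delta_C=\rank P$ and therefore $\projdim I_C(P)=\rank P+1$. Similarly, a facet of $\Delta_A$ of maximal dimension is a largest antichain of $P$; by Dilworth's theorem its cardinality equals the Dilworth number $d$ of $P$, so $\dim\Delta_A=d-1$ and hence $\projdim I_A(P)=d$, the Dilworth number. (Note that $\Delta_A$ need not be pure, but $\dim\Delta_A$ is the maximum of the facet dimensions, so this causes no trouble.)

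The argument is essentially bookkeeping once the identification $I_C(P)=J_{\Delta_C}$, $I_A(P)=J_{\Delta_A}$ is in place; the only point where one must be slightly careful is the convention for $\rank P$, which here must be taken to be the maximal length (number of covering steps) of a chain, so that a maximal chain has $\rank P+1$ vertices and the dimension count matches. I do not anticipate any genuine obstacle: the linearity of the resolution is immediate from Corollary~\ref{always}, and the projective dimension statements are direct translations of Corollary~\ref{betti} together with the elementary combinatorial facts about maximal chains and the Dilworth theorem stated above.
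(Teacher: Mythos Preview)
Your proof is correct and is exactly the argument the paper has in mind: the corollary is stated without proof precisely because it follows from identifying $I_C(P)=J_{\Delta_C}$ and $I_A(P)=J_{\Delta_A}$, invoking Corollary~\ref{always} (or Theorem~\ref{linearquotients}) for the linear resolution, and reading off the projective dimensions from Corollary~\ref{betti} together with Dilworth's theorem. One small wording quibble: not every maximal chain of $P$ has $\rank P+1$ elements unless $P$ is graded, so your sentence about facets of $\Delta_C$ is slightly imprecise; but since $\dim\Delta_C$ is governed by the \emph{longest} chain this does not affect the conclusion, and you already note the analogous non-purity issue for $\Delta_A$.
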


\section{Higher dimensional whiskers}

The purpose of this section is to generalize Corollary~\ref{shellable}. Let $\Delta$ be a simplicial complex on $\{x_1,\ldots,x_n\}$. Given positive integers  $k_1,\ldots,k_n$ and $d_1,\ldots,d_n$ with $d_i\leq k_i$ for all $i$, we define  the {\em higher dimensional whisker complex} $W^{d_1,\ldots,d_n}_{k_1,\ldots,k_n}(\Delta)$ of $\Delta$ to be  the simplicial complex on the vertex set
\[
x_1, x_1^{(1)},\cdots,x_1^{(k_1)},x_2,x_2^{(1)},\cdots,x_2^{(k_2)},\cdots,x_n,x_n^{(1)},\cdots,x_n^{(k_n)},
\]
whose facets are the facets of $\Delta$ together with all subsets of cardinality $d_i+1$ of $\{x_i, x_i^{(1)},\ldots,x_i^{(k_i)}\}$  for  $i=1,\ldots,n$.  These subsets are called the whiskers of $\Delta$.

Note that the wisker complex of $\Delta$ as defined in Section 1 is just the complex  $W^{1,\ldots,1}_{1,\ldots,1}(\Delta)$. See Figure~1 for an example of a higher whisker complex.

\begin{figure}[hbt]
\begin{center}
\psset{unit=1cm}
\begin{pspicture}(-6,-4)(5,5)

\rput(0,0){$\bullet$}
\rput(1.5,1){$\bullet$}
\rput(0,2){$\bullet$}

\pspolygon(0,0)(0,2)(1.5,1)(0,0)
\psline(1.5,1)(3,1)
\rput(3,1){$\bullet$}
\rput(3.5,1.1){$x_3^{(1)}$}

\pspolygon[style=fyp,fillcolor=light](0,0)(0,2)(-1.5,1)(0,0)
\rput(-1.5,1){$\bullet$}

\pspolygon[style=fyp,fillcolor=light](-1.5,1)(-4,-0.5)(-4,2.5)
\psline(-1.5,1)(-3,1)
\psline(-3,1)(-4,2.5)
\psline(-3,1)(-4,-0.5)

\rput(-3,1){$\bullet$}
\rput(-4,2.5){$\bullet$}
\rput(-4,-0.5){$\bullet$}

\pspolygon(0,0)(-1.5,-2.5)(1.5,-2.5)
\psline(0,0)(0,-1.5)
\psline(0,-1.5)(-1.5,-2.5)
\psline(0,-1.5)(1.5,-2.5)

\rput(0,-1.5){$\bullet$}
\rput(-1.5,-2.5){$\bullet$}
\rput(1.5,-2.5){$\bullet$}

\pspolygon(0,2)(-1, 3.5)(1,3.5)
\rput(-1,3.5){$\bullet$}
\rput(1,3.5){$\bullet$}

\rput(-1.5, 0.7){$x_1$}
\rput(0.4,0){$x_2$}
\rput(1.5,0.7){$x_3$}
\rput(0.4,2){$x_4$}

\rput(-3.8, -0.75){$x_1^{(2)}$}
\rput(-3.8,2.9){$x_1^{(1)}$}
\rput(-3.45,1){$x_1^{(3)}$}
\rput(-0.9,3.9){$x_4^{(1)}$}
\rput(1.2,3.9){$x_4^{(2)}$}

\rput(-1.7,-2.9){$x_2^{(1)}$}
\rput(1.8,-2.9){$x_2^{(2)}$}
\rput(0.1,-2){$x_2^{(3)}$}

\end{pspicture}
\end{center}
\caption{}\label{butterfly}
\end{figure}

\begin{Theorem}
\label{generalized}
The independence complex of a higher whisker complex is shellable.
\end{Theorem}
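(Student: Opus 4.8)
The plan is to deduce shellability from linear quotients, in the same way Corollary~\ref{shellable} follows from Theorem~\ref{linearquotients}. Write $\Gamma=W^{d_1,\ldots,d_n}_{k_1,\ldots,k_n}(\Delta)$, put $V_i=\{x_i,x_i^{(1)},\ldots,x_i^{(k_i)}\}$ and $c_i=k_i+1-d_i$, and let $R$ be the polynomial ring on the vertices of $\Gamma$. Since a simplicial complex is shellable precisely when the Stanley--Reisner ideal of its Alexander dual has linear quotients, and since the Stanley--Reisner ideal of the independence complex of $\Gamma$ is the facet ideal $I(\Gamma)$, it suffices to prove that $I(\Gamma)^{\vee}$ has linear quotients, where $I(\Gamma)^{\vee}$ is the ideal of $R$ generated by the monomials $u_C=\prod_{v\in C}v$ with $C$ a minimal vertex cover of $\Gamma$. (One may assume that no singleton $\{x_i\}$ is a facet of $\Delta$, since deleting such a facet leaves $\Gamma$ unchanged.)

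First I would identify the minimal vertex covers of $\Gamma$. Because each facet of $\Gamma$ is either a facet of $\Delta$ or a $(d_i+1)$-subset of some $V_i$, a set $C$ of vertices is a vertex cover of $\Gamma$ if and only if $W:=C\cap\{x_1,\ldots,x_n\}$ is a vertex cover of $\Delta$ and $|C\cap V_i|\geq c_i$ for every $i$; and since any surplus vertex of an oversized $C\cap V_i$ can be removed, $C$ is minimal if and only if $|C\cap V_i|=c_i$ for all $i$. Thus the minimal vertex covers are exactly the sets $C=W\cup\bigcup_{i}T_i$, where $W$ runs over the (not necessarily minimal) vertex covers of $\Delta$ and $T_i\subseteq\{x_i^{(1)},\ldots,x_i^{(k_i)}\}$ has size $c_i$ if $x_i\notin W$ and size $c_i-1$ if $x_i\in W$. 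In particular all these covers have cardinality $\sum_i c_i$, so $I(\Gamma)^{\vee}$ is generated in a single degree, i.e.\ the independence complex of $\Gamma$ is pure.

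Next I would choose an order on the generators $u_C$. Primarily, order them by $W=C\cap\{x_1,\ldots,x_n\}$, requiring that whenever $W'\supsetneq W''$ every generator with $x$-part $W'$ precedes every generator with $x$-part $W''$ (extend this to a total order on the $W$-blocks arbitrarily). Secondarily, observe that the generators with $x$-part equal to a fixed $W$ are precisely the generators of $x_W\cdot\prod_i\mathcal{V}_i$, where $\mathcal{V}_i$ is the squarefree Veronese ideal spanned by all products of $r_i$ of the variables $x_i^{(1)},\ldots,x_i^{(k_i)}$, with $r_i=c_i$ if $x_i\notin W$ and $r_i=c_i-1$ if $x_i\in W$; as squarefree Veronese ideals have linear quotients and a product of ideals with linear quotients in pairwise disjoint variables again has linear quotients (order the product generators lexicographically), I order each block by such a linear-quotients order. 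To verify the global property, suppose $u_{C_s}$ precedes $u_{C_t}$. If $W_s\neq W_t$, then $W_s\not\subseteq W_t$ (else $u_{C_t}$ would come first), so fix $i$ with $x_i\in W_s\setminus W_t$; since $x_i\notin W_t$ the $T_i$-part of $C_t$ has $c_i\geq1$ elements, so some $x_i^{(j)}\in C_t$, and $C':=(C_t\setminus\{x_i^{(j)}\})\cup\{x_i\}$ is again a minimal vertex cover whose $x$-part is $W_t\cup\{x_i\}\supsetneq W_t$. Hence $u_{C'}$ precedes $u_{C_t}$, $u_{C'}/\gcd(u_{C'},u_{C_t})=x_i$, and $x_i$ divides $u_{C_s}/\gcd(u_{C_s},u_{C_t})$. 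If $W_s=W_t$, then $u_{C_s}$ and $u_{C_t}$ lie in one block and the required single-variable colon relation is provided by the linear-quotients order on that block. In all cases $(u_{C_{s'}}:s'<t):u_{C_t}$ is generated by variables, so $I(\Gamma)^{\vee}$ has linear quotients and the independence complex of $\Gamma$ is shellable.

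I expect the main obstacle to be pinning down the minimal vertex covers — in particular the slightly counterintuitive fact that non-minimal vertex covers $W$ of $\Delta$ do occur as $x$-parts of minimal vertex covers of $\Gamma$ — and then choosing the order so the two sorts of exchange cooperate: reinstating a hub vertex $x_i$ enlarges $W$ and must move a generator earlier, which forces ``larger $W$ first'', whereas the exchanges among the $x_i^{(j)}$'s inside a block must be absorbed into the Veronese linear-quotients order. The point that makes everything mesh is that the cross-block case can always be settled using the single variable $v=x_i$ with $x_i\in W_s\setminus W_t$.
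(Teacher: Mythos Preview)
Your proof is correct and follows essentially the same route as the paper: reduce shellability of the independence complex to linear quotients of the cover ideal, describe the minimal vertex covers as sets whose intersection with each $V_i$ has size $k_i+1-d_i$, order the generators primarily by the $x$-part $W$ (larger first) and secondarily by a linear-quotients order on the product of squarefree Veronese ideals, and handle the cross-block case by the exchange $x_i^{(j)}\mapsto x_i$. The only cosmetic differences are that the paper fixes the primary order to be degree-lex on $x_{C_0}$ (a particular linear extension of your reverse containment order) and phrases the within-block step via matroidal ideals rather than squarefree Veronese ideals; your explicit remark about singleton facets of $\Delta$ is a small point the paper leaves implicit.
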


\begin{proof}
Let $W^{d_1,\ldots,d_n}_{k_1,\ldots,k_n}(\Gamma)$ be the whisker complex,  $\Delta$ its independence complex and $I=I_{\Delta^\vee}$.
We will show that $I$ has linear quotients. This is equivalent to say that $\Delta$ is shellable. Note that the generators of $I$ correspond bijectively to the vertex covers of $W^{d_1,\ldots,d_n}_{k_1,\ldots,k_n}(\Gamma)$. Indeed, if $$C\subset \{x_1, x_1^{(1)},\ldots,x_1^{(k_1)},x_2,x_2^{(1)},\ldots,x_2^{(k_2)},\ldots,x_n,x_n^{(1)},\ldots,x_n^{(k_n)}\}$$ is a minimal vertex cover of $W^{d_1,\ldots,d_n}_{k_1,\ldots,k_n}(\Gamma)$, then the product of the elements in $C$ is the corresponding generator of $I$.

We claim, that $C$ is a minimal vertex cover of $W^{d_1,\ldots,d_n}_{k_1,\ldots,k_n}(\Gamma)$ if and only if the following conditions are satisfied:
\begin{enumerate}
\item[(1)] $C\sect \{x_1,\ldots,x_n\}$ is a vertex cover of $\Gamma$;
\item[(2)] $C\sect \{x_i, x_{1}^{(i)}, \ldots,x_{k_i}^{(i)}\}$ is a minimal vertex cover of the $d_i$-skeleton of the simplex on $\{x_i, x_{i1}, \ldots,x_{ik_i}\}$.
\end{enumerate}
Clearly any set $C$ of vertices satisfying (1) and (2) is a minimal vertex cover.

We denote by  $u_C\in I$ the  monomial corresponding to the vertex cover $C$ of  $W^{d_1,\ldots,d_n}_{k_1,\ldots,k_n}(\Gamma)$. Then
\[
u_C=x_{C_0}x_{C_1}\cdots x_{C_n},
\]
where
\[
x_{C_0}=\prod_{x_j\in C}x_i \quad  \text{and}\quad  x_{C_j}=\prod_{\{i\:\; x_j^{(i)}\in C\}}x_j^{(i)}
\]
for $j=1,\ldots,n$.

We now define a total order of the monomial generators of $I$ as follows: let $u_C=x_{C_0}x_{C_1}\cdots x_{C_n}$  and $u_D=x_{D_0}x_{D_1}\cdots x_{D_n}$. Then $u_C>u_D$ if either $x_{C_0}>x_{D_0}$ with respect to the degree  lexicographic order, or $x_{C_0}=x_{D_0}$ and $x_{C_1}\cdots x_{C_n}>x_{D_1}\cdots x_{D_n}$ with respect to the lexicographic order induced by
\[
x_1^{(1)}>\ldots >x_1^{(k_1)}>x_2^{(1)}>\ldots >x_2^{(k_2)}>\ldots >x_n^{(1)}>\ldots>x_n^{(k_n)}.
\]
We claim  that with this ordering of the generators, $I$ has linear quotients. We first note that all generators of $I$ have the same degree, namely, $\sum_{i=1}^n(k_i-d_i)+n$. We must show that for all $u_C$ the colon ideal $(u_D\: u_D>u_C):u_C$ is generated by variables. Let   $u_D>u_C$ with $x_{D_0}\neq x_{C_0}$. Then $x_{D_0}>x_{C_0}$, and hence there exists $x_j$ such that $x_j$ divides $x_{D_0}$ but does not divide $x_{C_0}$. Let $u_E$ be the generator with $x_{E_0}=x_{C_0}x_j$,  $x_{E_i}=x_{C_i}$ for $i\neq j$ and $x_{E_j}=x_{C_j}/x_j^{(l)}$ where $x_j^{(l)}$ divides $x_{C_j}$. Then $u_E>u_C$ and $(u_E):u_C=(x_j)$. Since $x_j$ divides $(u_D):u_C$, and we are done in this case.

We now consider the case $x_{D_0}=x_{C_0}$. Let $\MA=\{u_E\:\ x_{E_0}=x_{C_0}\}$. For each $i\geq1$, let    $\MA_i=\{x_{E_i}\:\ u_E\in \MA\}$. If $x_i$ divides $x_{C_0}$,  then  $\MA_i$ is the set of all monomials of degree $k_i-d_i$  in the variables $x_{1}^{(i)}, \ldots,x_{k_i}^{(i)}$, and if $x_i$ does not divide $x_{C_0}$,  then  $\MA_i$ is the set of all monomials of degree $k_i-d_i+1$ in the same set of variables. Note that $\MA_i$ generates a matroidal ideal. Moreover the product of matroidal ideals in pairwise disjoint sets of variables is again a matroidal ideal. It is known, see \cite[Theorem 12.6.2]{HH}, that matroidal ideals have linear quotients with respect to the lexicographic order of the generators. This completes the proof of the theorem.
\end{proof}

\end{document}